\numberwithin{equation}{section}
\theoremstyle{plain}
\newtheorem{theorem}{Theorem}[section]
\newtheorem{lemma}[theorem]{Lemma}
\newtheorem{proposition}[theorem]{Proposition}
\theoremstyle{definition}
\let\c@equation\c@theorem  
\begin{document}

\title[]
{Weak Hopf algebra structures on hybrid numbers}

\author{Tang Jiangang$^{1}$ and  Chen Quanguo$^{2}$   }
\address{$^{ 1 }$Division of Mathematics, Sichuan University  Jinjiang College,  Sichuan, Meishan, China; $^{ 2 }$College of Mathematics and Statistics, Kashi University, Xinjiang, Kashi, China\\}
\email{ cqg211@163.com (Q.G.Chen$^{\ast}$   Corresponding author), jg-tang@163.com(J. G. Tang)}
\thanks{This work was supported by the National Natural Science
	Foundation of China (No. 12271292) and the Natural Foundation of Shandong Province(No. ZR2022MA002).}

\begin{abstract}
Let $\mathbb{K}$ be the set of hybrid numbers. This paper is to look for all the weak Hopf structures  on $\mathbb{K}$. Once $\mathbb{K}$ is endowed with a structure of a weak Hopf algebra, we shall compute the source algebra and target algebra of $\mathbb{K}$. Also, we shall discuss the integrals in such weak Hopf algebra  $\mathbb{K}$.
\end{abstract}

\subjclass[2020]{16T05, 17B38}





\keywords{Hopf algebra, weak Hopf algebra,
integral, separable algebra, hybrid number }


\maketitle

\setcounter{section}{0}
\section{Introduction}
\label{xxsec0}
Weak Hopf algebras were introduced by B\"{o}hm, Nill and Szlach\'{a}nyi in \cite{Bohm} as a generalization of ordinary Hopf algebras and groupoid algebras. In the notion of a weak Hopf algebra,  the conditions which $\Delta$, $\varepsilon$ and the antipode $S$ satisfy are weakened, for example, the comultiplication of a weak Hopf algebra is no longer required to preserve the unit. The axioms of a weak Hopf algebra are self-dual, which ensures that, for a finite dimensional weak Hopf algebra $H$, its dual vector space $H^{\ast}$ has a natural structure of a weak Hopf algebra. The motivation to study weak Hopf algebras was their connection with the theory of algebra extensions \cite{Kadison} and their important application that they provide a natural framework for the study of dynamical twists in Hopf algebra \cite{Etingof}.

As a generalization of complex, hyperbolic and dual numbers,  a new non-commu
tative number system called hybrid numbers are introduced in \cite{ozdemir}. A hybrid number is a number created with any combination of the complex, hyperbolic and dual numbers satisfying
the certain relation.

The purpose of this paper is to discuss  all the weak Hopf algebra structures on  $\mathbb{K}$ (the set of hybrid numbers).

The paper is organized as follows. In Sec. 2, we recall basic definitions and give some fundamental properties concerning weak Hopf algebras and hybrid numbers. In Sec. 3, we shall discuss all the weak Hopf algebra structures on $\mathbb{K}$, and compute the source algebra and target algebra of such weak Hopf algebra. In Sec. 4, we shall discuss the left and right integrals in such weak Hopf algebra.
\section{Preliminaries}
\label{xxsec0}
Throughout the paper, all vector spaces, linear maps and tensor products are over $\mathbb{C}$.
We follow Montgomery’s book for terminologies on algebras, coalgebras and modules, but omit the usual summation
indices and summation symbols.

\subsection{ Weak Hopf algebras}
A sixtuple $(H, m, 1, \Delta, \varepsilon, S)$ is a weak Hopf algebra with antipode $S$, if $(H,m,1)$ is an algebra and $(H, \Delta, \varepsilon)$ is a coalgebra, and the following conditions hold:
\begin{equation}\label{eq1}
	\Delta(kh)=\Delta(k)\Delta(h),  \forall  h,k\in H,
\end{equation}
\begin{equation}\label{eq2}
\varepsilon(kh_{(1)})\varepsilon(h_{(2)}g)=\varepsilon(khg)=\varepsilon(kh_{(2)})\varepsilon(h_{(1)}g), \forall h,k,g\in H,
\end{equation}
\begin{equation}\label{eq3}
	(1\otimes \Delta(1))( \Delta(1)\otimes1)=\Delta^{2}(1)=(\Delta(1)\otimes 1)( 1\otimes\Delta(1)),
\end{equation}
\begin{equation}\label{eq4}
h_{(1)}S(h_{(2)})=\varepsilon_{t}(h), S(h_{(1)})h_{(2)}=\varepsilon_{s}(h), S(h_{(1)})h_{(2)}S(h_{(3)})=S(h),
\end{equation}
where $\varepsilon_{t}, \varepsilon_{s}: H\rightarrow H $ are defined by $\varepsilon_{t}(h)=\varepsilon(1_{(1)}h)1_{(2)}, \varepsilon_{s}(h)=\varepsilon(h1_{(2)})1_{(1)}$. We will denote $H_{t}=\varepsilon_{t}(H)$ and $H_{s}=\varepsilon_{s}(H)$.

Recall from \cite{Pierce} that an algebra $A$ is separable if and only if there exists a $q\in A\otimes A$ such
that
$$
(x\otimes 1)q=q(1\otimes x)
$$
holds, for all $x\in A$ and furthermore $m_{A}(q) = 1$. Such a $q$ will be called a separable idempotent.  Let $H$ be a weak Hopf algebra. By Proposition 2.11 in \cite{Bohm}, we know that $H_{t}(H_{s})$ is a separable algebra with the separable idempotent given by
$$
q=S(1_{(1)})\otimes 1_{(2)}(q=1_{(1)}\otimes S( 1_{(2)})).
$$

A left (right) integral in a weak Hopf algebra $H$ is an element $l\in H (r\in H)$
satisfying
$$
xl=\varepsilon_{t}(x)l \quad (rx=r\varepsilon_{s}(x)),
$$
for all $x\in H$. The space of left (right) integrals in $H$ is denoted by $I^{L}(H)$ $(I^{R}(H))$.

\subsection{Hybrid numbers} The set of hybrid numbers, denoted by $\mathbb{K}$, is defined as
$$
\mathbb{K}=\{a+b g+c\mu+d\nu|a, b,c, d\in \mathbb{C}, g^{2}=-1, \mu^{2}=0,\nu^{2}=1, g\nu =-\nu g =\mu +g\}.
$$
Using these equalities $$ g^{2}=-1, \mu^{2}=0,\nu^{2}=1, g\nu =-\nu g =\mu +g,$$ we can gain the following multiplication table $$\begin{tabular}{|c|c|c|c|c|}
	\hline
	& 1 & $g$ & $\mu$ & $\nu$ \\
	\hline
	1	& 1 & $g$ & $\mu$ & $\nu$ \\
	\hline
	$g$	& $g$ & -1 & 1-$\nu$ & $\mu$+$g$ \\
	\hline
	$\mu$	& $\mu$ & $\nu$+1 & 0 &  -$\mu$\\
	\hline
	$\nu$	& $\nu$ &-$\mu$-$g$  & $\mu$ &  1\\
	\hline
\end{tabular}$$
This table shows us that the multiplication operation in the
hybrid numbers is not commutative. But it has the property of associativity.
\section{Weak Hopf algebra structures on $\mathbb{K}$}
\begin{lemma}\label{lem1}
	$\mathbb{K}$ is endowed with the coalgebra structure as
	$$
\begin{cases}
\Delta(1)=\frac{1}{2} 1\otimes 1-\frac{1}{2} \mu\otimes \mu-\frac{i}{2} \nu\otimes \mu-\frac{i}{2} \mu\otimes \nu+\frac{1}{2} \nu\otimes \nu, & \\
		\Delta(g)=-\frac{1}{2} \mu\otimes 1-\frac{i}{2} \nu\otimes 1+b g\otimes g
		+b \mu \otimes g + ib \nu\otimes g
		-\frac{1}{2} 1\otimes \mu+b g\otimes \mu
		+b \mu \otimes \mu \\
		\hspace{1cm}+ ib \nu\otimes \mu
		-\frac{i}{2} 1\otimes \nu+ib g\otimes \nu
		+ib \mu \otimes \nu -b \nu\otimes \nu, & \\
	\Delta(\mu)=\frac{1}{2b}\mu\otimes\mu,&\\
	\Delta(\nu)=-\frac{i}{2}\mu \otimes 1+\frac{1}{2}\nu\otimes 1
	-\frac{i}{2}1 \otimes \mu+\frac{i}{2b}\mu\otimes \mu+\frac{1}{2}1\otimes \nu,&\\
	\varepsilon(1)=2, \varepsilon(g)=\frac{1}{b},\varepsilon(\mu)=2b, \varepsilon(\nu)=2ib,
\end{cases}
$$
where $b$ is a non-zero parameter. 	
\end{lemma}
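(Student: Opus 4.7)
The statement asserts that the given formulas equip $\mathbb{K}$ with a coalgebra structure, so the proof is a direct verification of the two coalgebra axioms on the basis $\{1,g,\mu,\nu\}$: the counit identity $(\varepsilon\otimes\id)\Delta=\id=(\id\otimes\varepsilon)\Delta$ and coassociativity $(\Delta\otimes\id)\Delta=(\id\otimes\Delta)\Delta$. Both axioms are $\mathbb{C}$-linear, so it suffices to check them on each of the four generators.

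My plan is to dispatch the counit axiom first, because it is the cheapest check and a very useful sanity test for the formulas of $\Delta$. For each generator $x\in\{1,g,\mu,\nu\}$ one simply applies $\varepsilon$ (using $\varepsilon(1)=2,\varepsilon(g)=\tfrac{1}{b},\varepsilon(\mu)=2b,\varepsilon(\nu)=2ib$) to the first (respectively second) tensor factor of $\Delta(x)$ and collects terms. For instance, $(\varepsilon\otimes\id)\Delta(\mu)=\tfrac{1}{2b}\varepsilon(\mu)\,\mu=\mu$, and the analogous expansions for $1,\nu,g$ reduce, after cancellations between the $\mu$- and $\nu$-terms weighted by $\varepsilon(\mu)=2b$ and $\varepsilon(\nu)=2ib$, to the identity. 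I would organize these as four short computations per side and tabulate them to keep track of signs and powers of $i$.

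The main work is coassociativity. Here I would handle the three easy generators first. For $\mu$ we have $\Delta(\mu)=\tfrac{1}{2b}\mu\otimes\mu$, so both iterates equal $\tfrac{1}{4b^{2}}\mu\otimes\mu\otimes\mu$ and coassociativity is immediate. For $\nu$ and $1$ each iterated coproduct expands into a modest number of rank-one tensors in $\{1,\mu,\nu\}^{\otimes 3}$; since $\mu$ only reproduces $\mu\otimes\mu$ and $\nu$ reproduces itself (plus controlled $\mu,1$-terms), I can expand both sides systematically and match coefficient by coefficient on the finite list of pure tensors $e_{i}\otimes e_{j}\otimes e_{k}$ with $e_{\ast}\in\{1,\mu,\nu\}$.

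The real obstacle is coassociativity on $g$. Here $\Delta(g)$ is a linear combination of twelve pure tensors involving all four generators $1,g,\mu,\nu$, so $(\Delta\otimes\id)\Delta(g)$ and $(\id\otimes\Delta)\Delta(g)$ each expand into a large sum of triples in $\{1,g,\mu,\nu\}^{\otimes 3}$; in particular the $g\otimes g$, $g\otimes\mu$, $\mu\otimes g$, $\nu\otimes g$, $g\otimes\nu$ summands each produce many new terms when one further factor is coproduced. The cleanest way to carry this out is to introduce the auxiliary element $z:=g+\mu+i\nu$; inspecting the formula for $\Delta(g)$ one sees that the terms regroup as
\begin{equation*}
\Delta(g)=-\tfrac{1}{2}(\mu+i\nu)\otimes 1-\tfrac{1}{2}\,1\otimes(\mu+i\nu)+b\,z\otimes z,
\end{equation*}
and similarly $\Delta(\nu)$ and $\Delta(1)$ admit compact expressions in terms of $\mu+i\nu$ and $\nu-i\mu$. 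With these groupings I would compute $\Delta(z)$ once, observe that $\Delta(\mu+i\nu)=\tfrac{1}{2b}\mu\otimes(\mu+i\nu)+\tfrac{1}{2}\cdot\text{(symmetric terms)}$, and then expand both iterated coproducts of $g$ in a unified form where most cross-terms cancel or pair off. Matching the surviving coefficients on each basis triple finishes the proof. The bookkeeping of signs and factors of $i$ and $b$ in this last step is the only genuinely delicate part; everything else is mechanical.
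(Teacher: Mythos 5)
Your proposal is correct in substance and follows the same basic route as the paper: both proofs are direct verifications of coassociativity and of $(\varepsilon\otimes\id)\Delta=\id=(\id\otimes\varepsilon)\Delta$ on the basis $\{1,g,\mu,\nu\}$ (the paper carries out coassociativity explicitly only for $\nu$ and the counit identity only for $g$, declaring the remaining cases ``similar''). Where you genuinely depart from the paper is in the bookkeeping for the hard case $g$: your regrouping $\Delta(g)=-\tfrac12(\mu+i\nu)\otimes 1-\tfrac12\,1\otimes(\mu+i\nu)+b\,z\otimes z$ with $z=g+\mu+i\nu$ is correct and is a real improvement over the term-by-term expansion in the paper. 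One intermediate claim needs fixing, though: your displayed guess $\Delta(\mu+i\nu)=\tfrac{1}{2b}\mu\otimes(\mu+i\nu)+\cdots$ is not what the formulas give. The $\mu\otimes\mu$ contributions from $\Delta(\mu)$ and $i\Delta(\nu)$ cancel, and in fact $\Delta(\mu+i\nu)=\tfrac12\bigl((\mu+i\nu)\otimes 1+1\otimes(\mu+i\nu)\bigr)$; consequently $\Delta(z)=\Delta(g)+\Delta(\mu+i\nu)=b\,z\otimes z$, and also $\Delta(1)=\tfrac12\bigl(1\otimes 1+(\nu-i\mu)\otimes(\nu-i\mu)\bigr)$ with $\mu+i\nu=i(\nu-i\mu)$. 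So after the (corrected) change of basis $\{1,\mu,\nu-i\mu,\,g+\mu+i\nu\}$ the coproduct is essentially ``grouplike on $z$ and $\mu$, primitive-like on $\nu-i\mu$,'' and both coassociativity and the counit axiom reduce to two-line checks --- a cleaner argument than the paper's, and arguably more complete, since the paper leaves most cases to the reader. The slip in the intermediate formula does not break your plan (the true identity is simpler than the one you guessed), but it should be corrected before the cancellation argument is written out.
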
	
\begin{proof}
Firstly, we shall check that, for all $x\in \{1, g,\mu, \nu\}$, the following equality holds,
$$
(\Delta \otimes id)\Delta(x)=(id\otimes \Delta)\Delta(x).
$$
For $x=\nu$, on one hand,
\begin{eqnarray*}
(\Delta \otimes id)\Delta(\nu)
&=&(\Delta \otimes id)(-\frac{i}{2}\mu \otimes 1+\frac{1}{2}\nu\otimes 1
-\frac{i}{2}1 \otimes \mu+\frac{i}{2b}\mu\otimes \mu+\frac{1}{2}1\otimes \nu)\\
&=&-\frac{i}{4}\mu\otimes 1\otimes 1+\frac{1}{4}\nu\otimes 1\otimes 1-\frac{i}{4}1\otimes \mu\otimes 1+\frac{1}{4}1\otimes \nu\otimes 1\\
&&-\frac{i}{4}1\otimes 1\otimes \mu+\frac{ib^{2}+i}{4b^{2}}\mu\otimes\mu\otimes\mu-\frac{1}{4}\nu\otimes \mu\otimes \mu\\
&&-\frac{1}{4}\mu\otimes \nu\otimes \mu-\frac{i}{4}\nu\otimes \nu\otimes \mu+\frac{1}{4}1\otimes 1\otimes \nu\\
&&-\frac{1}{4}\mu\otimes \mu\otimes \nu-\frac{i}{4}\nu\otimes \mu\otimes \nu-\frac{i}{4}\mu\otimes \nu\otimes \nu+\frac{1}{4}\nu\otimes \nu\otimes \nu,
\end{eqnarray*}
on the other hand,
\begin{eqnarray*}
	(id\otimes \Delta)\Delta(\nu)
	&=&(id\otimes \Delta)(-\frac{i}{2}\mu \otimes 1+\frac{1}{2}\nu\otimes 1
	-\frac{i}{2}1 \otimes \mu+\frac{i}{2b}\mu\otimes \mu+\frac{1}{2}1\otimes \nu)\\
	&=&-\frac{i}{4}\mu\otimes 1\otimes 1+\frac{ib^{2}+i}{4b^{2}}\mu\otimes \mu\otimes \mu-\frac{1}{4}\mu\otimes \nu\otimes \mu-\frac{1}{4}\mu\otimes \mu\otimes \nu\\
	&&-\frac{i}{4}\mu\otimes \nu\otimes \nu+\frac{1}{4}\nu\otimes1\otimes1-\frac{1}{4}\nu\otimes \mu\otimes \mu\\
	&&-\frac{i}{4}\nu\otimes \nu\otimes \mu-\frac{i}{4}\nu\otimes \mu\otimes \nu+\frac{1}{4}\nu\otimes \nu\otimes \nu\\
	&&-\frac{i}{4}1\otimes \mu\otimes 1+\frac{1}{4}1\otimes \nu\otimes 1-\frac{i}{4}1\otimes 1\otimes \mu+\frac{1}{4}1\otimes 1\otimes \nu,
\end{eqnarray*}
by comparing the two results, we can get $$
(\Delta \otimes id)\Delta(\nu)=(id\otimes \Delta)\Delta(\nu).
$$
Since
\begin{eqnarray*}
	&&(id\otimes \varepsilon) \Delta(g)\\
	&=&(id\otimes \varepsilon)(-\frac{1}{2} \mu\otimes 1-\frac{i}{2} \nu\otimes 1+b g\otimes g
	+b \mu \otimes g + ib \nu\otimes g\\
&&	-\frac{1}{2} 1\otimes \mu+b g\otimes \mu
	+b \mu \otimes \mu+ ib \nu\otimes \mu	-\frac{i}{2} 1\otimes \nu\\
&&
+ib g\otimes \nu
	+ib \mu \otimes \nu -b \nu\otimes \nu)\\
&=&-\mu-i\nu +g+\mu+i\nu -b1+2b^{2}g+2b^{2}\mu+2ib^{2}\nu+b1-2b^{2}g-2b^{2}\mu-2ib^{2}\nu\\
&=&g
\end{eqnarray*}
and
\begin{eqnarray*}
	&&(\varepsilon\otimes id) \Delta(g)\\
	&=&(\varepsilon\otimes id) (-\frac{1}{2} \mu\otimes 1-\frac{i}{2} \nu\otimes 1+b g\otimes g
	+b \mu \otimes g + ib \nu\otimes g\\
	&&	-\frac{1}{2} 1\otimes \mu+b g\otimes \mu
	+b \mu \otimes \mu+ ib \nu\otimes \mu	-\frac{i}{2} 1\otimes \nu\\
	&&
	+ib g\otimes \nu
	+ib \mu \otimes \nu -b \nu\otimes \nu)\\
	&=&-b1+b1+g+2b^{2}g-2b^{2}g-\mu+\mu+2b^{2}\mu-2b^{2}\mu-i\nu+i\nu +2ib^{2}\nu-2ib^{2}\nu\\
	&=&g,
\end{eqnarray*}
it follows that $$(id\otimes \varepsilon) \Delta(g)=g=(\varepsilon\otimes id) \Delta(g).$$
Similarly, we have
$$
(id\otimes \varepsilon) \Delta(x)=x=(\varepsilon\otimes id) \Delta(x), x\in \{1,\mu,\nu\}.
$$
So one has
$$
(id\otimes \varepsilon) \Delta=id=(\varepsilon\otimes id) \Delta.
$$
\end{proof}	
\begin{lemma}\label{lem2}
	With the comultiplication $\Delta$   given in Lemma \ref{lem1}. Then $\Delta$ satisfies the conditions (\ref{eq1}) and (\ref{eq3}).
\end{lemma}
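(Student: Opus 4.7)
The plan is to reduce both conditions to checks on the basis $\{1,g,\mu,\nu\}$ of $\mathbb{K}$. Since $\Delta$ is $\mathbb{C}$-linear and the multiplications in $\mathbb{K}$ and $\mathbb{K}\otimes\mathbb{K}$ are bilinear, (\ref{eq1}) is equivalent to the sixteen equalities $\Delta(xy)=\Delta(x)\Delta(y)$ with $x,y\in\{1,g,\mu,\nu\}$. For each pair I would compute the left-hand side from the multiplication table of $\mathbb{K}$ together with the formulas in Lemma~\ref{lem1}, and the right-hand side by expanding the product of the two tensor sums coordinate-wise and simplifying each factor with the same table.

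A uniform shortcut cuts the workload substantially: any term in $\mathbb{K}\otimes\mathbb{K}$ that produces $\mu$ on the same side of the tensor in both factors is killed by $\mu^{2}=0$. Since $\Delta(\mu)$, $\Delta(\nu)$ and $\Delta(1)$ are dominated by terms of the form $\mu\otimes\mu$, $\mu\otimes\nu$ or $\nu\otimes\mu$, the cases in which $x$ or $y$ lies in $\{1,\mu,\nu\}$ collapse quickly once one applies $\mu^{2}=0$, $\nu^{2}=1$, $\mu\nu=-\mu$, $\nu\mu=\mu$ and $g\nu=-\nu g=\mu+g$. The only case of significant length is $\Delta(g)\Delta(g)=\Delta(-1)=-\Delta(1)$, essentially a $13\times 13$ expansion; I would organize this sum by the first tensor factor and repeatedly invoke the same relations.

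For condition (\ref{eq3}) I would exploit a structural factorization of $\Delta(1)$. Set
\[
p=\tfrac{1}{2}(1-i\mu+\nu),\qquad q=\tfrac{1}{2}(1+i\mu-\nu).
\]
A short computation with the multiplication table gives $p^{2}=p$, $q^{2}=q$, $pq=qp=0$ and $p+q=1$, so $p,q$ form a complete orthogonal system of idempotents in $\mathbb{K}$. Expanding $p\otimes p+q\otimes q$ one sees directly that
\[
\Delta(1)=p\otimes p+q\otimes q,
\]
after which orthogonality gives
\[
(\Delta(1)\otimes 1)(1\otimes\Delta(1))=p\otimes p\otimes p+q\otimes q\otimes q=(1\otimes\Delta(1))(\Delta(1)\otimes 1).
\]
To finish (\ref{eq3}) it then remains to verify that $\Delta^{2}(1)$ equals this same three-fold sum; I would do this by checking $\Delta(p)=p\otimes p$ and $\Delta(q)=q\otimes q$ directly from the formulas in Lemma~\ref{lem1} and then applying $\Delta\otimes\id$ to $\Delta(1)=p\otimes p+q\otimes q$.

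The main obstacle is purely computational rather than conceptual: the expansion of $\Delta(g)\Delta(g)$, and to a lesser extent the mixed products $\Delta(g)\Delta(x)$ and $\Delta(x)\Delta(g)$ for $x\in\{\mu,\nu\}$, produce large numbers of terms. No single cancellation trick eliminates them, but systematic use of $\mu^{2}=0$ and the multiplication table should bring each check to a manageable size.
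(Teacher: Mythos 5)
Your proposal is correct. For condition (\ref{eq1}) you do essentially what the paper does: reduce to the sixteen basis products $\Delta(xy)=\Delta(x)\Delta(y)$, $x,y\in\{1,g,\mu,\nu\}$, and grind them out with the multiplication table (the paper only displays the representative case $\Delta(g)\Delta(\nu)=\Delta(g\nu)$ and declares the rest similar; your plan commits to the full list, with $\Delta(g)\Delta(g)=-\Delta(1)$ as the heaviest check, which is the honest version of the same argument). For condition (\ref{eq3}), however, your route is genuinely different and better. The paper verifies $(1\otimes\Delta(1))(\Delta(1)\otimes 1)=\Delta^{2}(1)$ by expanding thirteen-term sums and again waves at the symmetric half. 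You instead observe that $p=\tfrac12(1-i\mu+\nu)$ and $q=\tfrac12(1+i\mu-\nu)$ are orthogonal idempotents with $p+q=1$, that $\Delta(1)=p\otimes p+q\otimes q$, and that $\Delta(p)=p\otimes p$, $\Delta(q)=q\otimes q$; I checked these identities against the table ($\mu\nu=-\mu$, $\nu\mu=\mu$, $\mu^{2}=0$, $\nu^{2}=1$) and the formulas of Lemma \ref{lem1}, and they all hold, so both products in (\ref{eq3}) and $\Delta^{2}(1)$ collapse at once to $p\otimes p\otimes p+q\otimes q\otimes q$. This buys you both equalities of (\ref{eq3}) simultaneously, with almost no expansion, and it also explains structurally what the paper only computes: $p$ and $q$ span exactly the subalgebra $\mathbb{C}\langle 1,\nu-i\mu\rangle$ that the paper later identifies as $\mathbb{K}_{t}=\mathbb{K}_{s}\cong\mathbb{C}\times\mathbb{C}$, and $\Delta(1)$ is its canonical separability idempotent. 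The only mild caution is that your heuristic about $\mu^{2}=0$ trimming terms is an organizational aid, not a substitute for actually carrying out the mixed products such as $\Delta(g)\Delta(\mu)$ and $\Delta(\nu)\Delta(g)$, where the coefficient $b+\tfrac{1}{2b}$ of $\mu\otimes\mu$ must come out right; since you plan to do all sixteen cases anyway, this is not a gap.
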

\begin{proof}
	Since
	\begin{eqnarray*}
		&&	(1\otimes \Delta(1))( \Delta(1)\otimes1)\\
		&=&(1\otimes (\frac{1}{2} 1\otimes 1-\frac{1}{2} \mu\otimes \mu-\frac{i}{2} \nu\otimes \mu-\frac{i}{2} \mu\otimes \nu+\frac{1}{2} \nu\otimes \nu))\\
		&&((\frac{1}{2} 1\otimes 1-\frac{1}{2} \mu\otimes \mu-\frac{i}{2} \nu\otimes \mu-\frac{i}{2} \mu\otimes \nu+\frac{1}{2} \nu\otimes \nu)\otimes 1)\\
	&=& \frac{1}{4}1\otimes 1\otimes 1-\frac{1}{4}\mu\otimes \mu\otimes 1-\frac{i}{4}\nu\otimes \mu\otimes 1-\frac{i}{4}\mu\otimes \nu\otimes 1+\frac{1}{4}\nu\otimes \nu\otimes 1\\
	&& -\frac{1}{4}1\otimes \mu\otimes \mu-\frac{i}{4}1\otimes \nu\otimes \mu-\frac{i}{4}1\otimes \mu\otimes \nu+\frac{1}{4}1\otimes \nu\otimes \nu\\
		&& -\frac{1}{4}\mu\otimes 1\otimes \mu-\frac{i}{4}\nu\otimes 1\otimes \mu-\frac{i}{4}\mu\otimes 1\otimes \nu+\frac{1}{4}\nu\otimes 1\otimes \nu
	\end{eqnarray*}
and
\begin{eqnarray*}
	\Delta^{2}(1)
	&=&(id\otimes \Delta)(\frac{1}{2} 1\otimes 1-\frac{1}{2} \mu\otimes \mu-\frac{i}{2} \nu\otimes \mu-\frac{i}{2} \mu\otimes \nu+\frac{1}{2} \nu\otimes \nu)\\
	&=&\frac{1}{4}1\otimes 1\otimes 1-\frac{1}{4}1\otimes \mu\otimes \mu-\frac{i}{4}1\otimes \nu\otimes \mu-\frac{i}{4}1\otimes \mu\otimes \nu\\
	&&+\frac{1}{4}1\otimes \nu\otimes \nu-\frac{1}{4}\mu\otimes \mu\otimes 1-\frac{i}{4}\mu\otimes \nu\otimes 1\\
&&-	\frac{1}{4}\mu\otimes 1\otimes \mu-	\frac{i}{4}\mu\otimes 1\otimes \nu-	\frac{i}{4}\nu\otimes \mu\otimes 1+	\frac{1}{4}\nu\otimes \nu\otimes 1\\
&&-	\frac{i}{4}\nu\otimes 1\otimes \mu+	\frac{1}{4}\nu\otimes 1\otimes \nu,
\end{eqnarray*}
it follows that
$$
(1\otimes \Delta(1))( \Delta(1)\otimes1)=	\Delta^{2}(1).
$$
Similarly, we can check that
$$
\Delta^{2}(1)=(\Delta(1)\otimes 1)( 1\otimes\Delta(1))
$$
holds.

Now, we shall check that $\Delta$ preserves the multiplication. For example,
\begin{eqnarray*}
	&&\Delta(g)\Delta(\nu)\\
	&=&(-\frac{1}{2} \mu\otimes 1-\frac{i}{2} \nu\otimes 1+b g\otimes g
	+b \mu \otimes g + ib \nu\otimes g\\
	&&-\frac{1}{2} 1\otimes \mu+b g\otimes \mu
	+b \mu \otimes \mu + ib \nu\otimes \mu\\
	&&-\frac{i}{2} 1\otimes \nu+ib g\otimes \nu
	+ib \mu \otimes \nu -b \nu\otimes \nu)(-\frac{i}{2}\mu \otimes 1+\frac{1}{2}\nu\otimes 1\\
	&&-\frac{i}{2}1 \otimes \mu+\frac{i}{2b}\mu\otimes \mu+\frac{1}{2}1\otimes \nu)\\
	&=&-\frac{1}{4}\mu\nu\otimes 1+\frac{i}{2}\mu\otimes\mu-\frac{1}{2}\mu\otimes\nu-\frac{1}{4}\nu\mu\otimes1
	-\frac{i}{4}\nu^{2}\otimes1\\
	&&-\frac{1}{2}\nu\otimes\mu+\frac{1+2b^{2}}{4b}\nu\mu\otimes\mu-\frac{i}{2}\nu\otimes\nu-\frac{bi}{2}g\mu\otimes g
	+\frac{b}{2}g\nu\otimes g\\
	&&-\frac{ib}{2}g\otimes g\mu+\frac{i}{2}g\mu\otimes g\mu+\frac{b}{2}g \otimes g\nu+\frac{b}{2}\mu\nu\otimes g-\frac{ib}{2}\mu\otimes g\mu\\
	&&+\frac{b}{2}\mu\otimes g\nu+\frac{b}{2}\nu\mu\otimes g+\frac{ib}{2}\nu^{2} \otimes g+\frac{b}{2}\nu\otimes g\mu-\frac{1}{2}\nu\mu\otimes g\mu\\
	&&+\frac{ib}{2}\nu\otimes g\nu-\frac{1}{4}1\otimes \mu\nu-\frac{ib}{2}g\mu \otimes \mu+\frac{b}{2}g\nu\otimes \mu+\frac{b}{2}g\otimes \mu\nu\\
	&&+\frac{b}{2}\mu\nu\otimes \mu+\frac{b}{2}\mu\otimes \mu\nu+\frac{ib}{2}\nu^{2} \otimes \mu+\frac{ib}{2}\nu\otimes \mu\nu\\
		&&-\frac{1}{4}1\otimes \nu\mu+\frac{1}{4  b}\mu\otimes \nu\mu-\frac{i}{4}1 \otimes \nu^{2}+\frac{b}{2}g\mu\otimes \nu+\frac{ib}{2}g\nu\otimes \nu\\
	&&+\frac{b}{2}g\otimes \nu\mu-\frac{1}{2}g\mu\otimes \nu\mu+\frac{ib}{2}g\otimes \nu^{2}+\frac{ib}{2}\mu\nu\otimes \nu+\frac{b}{2}\mu\otimes \nu\mu+\frac{ib}{2}\mu\otimes \nu^{2}\\
	&&+\frac{bi}{2}\nu\mu\otimes \nu-\frac{b}{2}\nu^{2}\otimes \nu+\frac{ib}{2}\nu\otimes \nu\mu-\frac{i}{2}\nu\mu\otimes \nu\mu-\frac{b}{2}\nu\otimes \nu^{2}\\
&=&-\frac{1}{2} \mu\otimes 1-\frac{i}{2} \nu\otimes 1+b g\otimes g
+b \mu \otimes g + ib \nu\otimes g
-\frac{1}{2} 1\otimes \mu+b g\otimes \mu\\
&&
+(b+\frac{1}{2b}) \mu \otimes \mu + ib \nu\otimes \mu
-\frac{i}{2} 1\otimes \nu+ib g\otimes \nu
+ib \mu \otimes \nu -b \nu\otimes \nu\\
&=&\Delta(\mu+g)=\Delta(g\nu).
\end{eqnarray*}
\end{proof}
\begin{lemma}\label{lem3}
	With the counit $\varepsilon$  given in Lemma \ref{lem1}. Then $\varepsilon$ satisfies the conditions (\ref{eq2}).
\end{lemma}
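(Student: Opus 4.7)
The plan is to verify the trilinear identity (\ref{eq2}) on all triples of basis elements of $\mathbb{K}$. To avoid a clash with the basis vector $g\in\mathbb{K}$, let me rename the third dummy variable in (\ref{eq2}) to $l$, so the identities to check are $\varepsilon(kh_{(1)})\varepsilon(h_{(2)}l)=\varepsilon(khl)=\varepsilon(kh_{(2)})\varepsilon(h_{(1)}l)$ for all $k,h,l\in\mathbb{K}$. Since $\varepsilon$ is linear and both sides are trilinear in $(k,h,l)$, it suffices to verify these for $k,h,l\in\{1,g,\mu,\nu\}$, yielding $64$ triples per equality.

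I would organize the verification by the middle variable $h$, because $\Delta(h)$ controls the number of summands on the left-hand side. The case $h=\mu$ is easiest: $\Delta(\mu)=\frac{1}{2b}\mu\otimes\mu$ has a single term, so the identity reduces to $\frac{1}{2b}\varepsilon(k\mu)\varepsilon(\mu l)=\varepsilon(k\mu l)$, which I check for each of the $16$ pairs $(k,l)$ using the multiplication table from Section~2. The cases $h=1$ and $h=\nu$ involve five-term coproducts, so each pair $(k,l)$ produces a sum of five products of counit values. The case $h=g$ is the most substantial: $\Delta(g)$ has twelve summands, so each of the sixteen $(k,l)$-pairs requires evaluating and summing twelve such terms.

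To carry out each case efficiently, I would first tabulate the bilinear form $B(a,b):=\varepsilon(ab)$ for $a,b\in\{1,g,\mu,\nu\}$ once and for all, using the multiplication table from Section~2 together with the counit values from Lemma~\ref{lem1}. Then, for each $h$ and each pair $(k,l)$, I would compute $\sum_{(h)}B(k,h_{(1)})B(h_{(2)},l)$ and compare with the target $\varepsilon(khl)$, the latter obtained by associatively reducing $k\cdot h\cdot l$ in $\mathbb{K}$ and applying $\varepsilon$. The second equality, obtained by swapping $h_{(1)}$ and $h_{(2)}$, is checked in exactly the same way; many terms in $\Delta(h)$ are already symmetric under this swap, so a number of subcases collapse into each other.

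The main obstacle is purely the volume of arithmetic. For $h=g$ in particular, the twelve summands of $\Delta(g)$ combined with the non-commutativity of $\mathbb{K}$ (for instance $g\nu=\mu+g$ while $\nu g=-\mu-g$) forbid any shortcut based on reordering factors, and the coefficients mix $b$, $b^{2}$ and $i$ in ways that only cancel after correct evaluation of the corresponding products in $\mathbb{K}$. I expect no conceptual difficulty — once the $B$-table and the triple products $khl$ are computed — but the bookkeeping is extensive, and it is in the cancellations between the $b$-dependent coproduct coefficients and the $b$-independent entries of the $\mathbb{K}$-multiplication table that computational errors are most likely to arise.
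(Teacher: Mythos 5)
Your proposal is correct and follows essentially the same route as the paper: direct verification of (\ref{eq2}) on basis triples using trilinearity, the multiplication table of $\mathbb{K}$, and the counit values from Lemma \ref{lem1}. The paper writes out only the representative case $(k,h,l)=(\mu,g,\nu)$ and declares the remaining cases similar, whereas you propose the full systematic check organized by the middle variable; this is just a more complete execution of the same computation.
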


\begin{proof}
	Since
	\begin{eqnarray*}
		\varepsilon(\mu g_{(1)})\varepsilon(g_{(2)}\nu)
		&=&-\frac{1}{2}\varepsilon(\mu \mu)\varepsilon(\nu)-\frac{i}{2}\varepsilon(\mu \nu)\varepsilon(\nu)
		+b\varepsilon(\mu g)\varepsilon(g\nu)\\
		&&+b\varepsilon(\mu \mu)\varepsilon(g\nu)+ib\varepsilon(\mu \nu)\varepsilon(g\nu)-\frac{1}{2}\varepsilon(\mu )\varepsilon(\mu\nu)\\
		&&+b\varepsilon(\mu g)\varepsilon(\mu\nu)+b\varepsilon(\mu \mu)\varepsilon(\mu\nu)+ib\varepsilon(\mu\nu )\varepsilon(\mu\nu)\\
		&&-\frac{i}{2}\varepsilon(\mu )\varepsilon(\nu\nu)+ib\varepsilon(\mu g)\varepsilon(\nu\nu)+ib\varepsilon(\mu g )\varepsilon(\nu\nu)\\
		&&+ib\varepsilon(\mu \mu)\varepsilon(\nu\nu)-b\varepsilon(\mu \nu)\varepsilon(\nu\nu)\\
&=&\frac{i}{2}\varepsilon(\mu )\varepsilon(\nu)
		+b\varepsilon(\nu+1)\varepsilon(\mu+g)+ib\varepsilon(- \mu)\varepsilon(\mu+g)\\
		&&-\frac{1}{2}\varepsilon(\mu )\varepsilon(-\mu)+b\varepsilon(\nu+1)\varepsilon(-\mu)+ib\varepsilon(-\mu )\varepsilon(-\mu)\\
		&&-\frac{i}{2}\varepsilon(\mu )\varepsilon(1)+ib\varepsilon(\nu+1)\varepsilon(1)+ib\varepsilon(\nu+1 )\varepsilon(1)\\
		&&-b\varepsilon(-\mu )\varepsilon(1)\\
		&=&-2b^{2}+b(2+2ib)(2b+\frac{1}{b})+ib(-2b)(2b+\frac{1}{b})\\
		&&-\frac{1}{2}2b(-2b)+b(2ib+2)(-2b)+ib(-2b)(-2b)\\
		&&-\frac{i}{2} 4b+ib(2ib+2)2+ib(2ib+2 )2+4b^{2}\\
	&=&2ib+2\\
	&=&\varepsilon(\mu g\nu)
	\end{eqnarray*}
	and
	\begin{eqnarray*}
		\varepsilon(\mu g_{(2)})\varepsilon(g_{(1)}\nu)
		&=&-\frac{1}{2}\varepsilon(\mu )\varepsilon(\mu\nu)-\frac{i}{2}\varepsilon(\mu )\varepsilon(\nu\nu)
		+b\varepsilon(\mu g)\varepsilon(g\nu)\\
		&&+b\varepsilon(\mu g)\varepsilon(\mu\nu)+ib\varepsilon(\mu g)\varepsilon(\nu\nu)-\frac{1}{2}\varepsilon(\mu\mu )\varepsilon(\nu)\\
		&&+b\varepsilon(\mu \mu)\varepsilon(g\nu)+b\varepsilon(\mu \mu)\varepsilon(\mu\nu)+ib\varepsilon(\mu\mu )\varepsilon(\nu\nu)\\
		&&-\frac{i}{2}\varepsilon(\mu \nu)\varepsilon(\nu)+ib\varepsilon(\mu \nu)\varepsilon(g\nu)+ib\varepsilon(\mu \nu )\varepsilon(g\nu)\\
		&&+ib\varepsilon(\mu \nu)\varepsilon(\mu\nu)-b\varepsilon(\mu \nu)\varepsilon(\nu\nu)\\
		&=&-\frac{1}{2}\varepsilon(\mu )\varepsilon(\mu\nu)-\frac{i}{2}\varepsilon(\mu )\varepsilon(\nu\nu)
	+b\varepsilon(\mu g)\varepsilon(g\nu)\\
	&&+b\varepsilon(\mu g)\varepsilon(\mu\nu)+ib\varepsilon(\mu g)\varepsilon(\nu\nu)+ib\varepsilon(\mu\mu )\varepsilon(\nu\nu)\\
	&&-\frac{i}{2}\varepsilon(\mu \nu)\varepsilon(\nu)+ib\varepsilon(\mu \nu)\varepsilon(g\nu)+ib\varepsilon(\mu \nu )\varepsilon(g\nu)\\
	&&+ib\varepsilon(\mu \nu)\varepsilon(\mu\nu)-b\varepsilon(\mu \nu)\varepsilon(\nu\nu)\\
		&=&-\frac{1}{2}\varepsilon(\mu )\varepsilon(-\mu)-\frac{i}{2}\varepsilon(\mu )\varepsilon(1)
	+b\varepsilon(\nu+1)\varepsilon(\mu+g)\\
	&&+b\varepsilon(\nu+1)\varepsilon(-\mu)+ib\varepsilon(\nu+1)\varepsilon(1)\\
	&&-\frac{i}{2}\varepsilon(-\mu)\varepsilon(\nu)+ib\varepsilon(-\mu )\varepsilon(\mu+g)+ib\varepsilon(-\mu )\varepsilon(\mu+g)\\
	&&+ib\varepsilon(-\mu)\varepsilon(-\mu)-b\varepsilon(-\mu)\varepsilon(1)\\
		&=&2b^{2}-2ib
	+b(2+2ib)(2b+\frac{1}{b})+b(2+2ib)(-2b)+2ib(2+2ib)\\
	&&\frac{i}{2}2b2ib-ib2b(2b+\frac{1}{b})-ib2b(2b+\frac{1}{b})+ib2b2b+4b^{2}\\
	&=&2ib+2\\
	&=&\varepsilon(\mu g\nu),
		\end{eqnarray*}
		it follows that
		$$
		\varepsilon(\mu g_{(2)})\varepsilon(g_{(1)}\nu)=\varepsilon(\mu g\nu)=	\varepsilon(\mu g_{(1)})\varepsilon(g_{(2)}\nu).
		$$
Similarly, the condition (\ref{eq2}) holds for the others.
\end{proof}

To sum up, we have the main result in this section.
\begin{theorem}\label{thm1}
		With the coalgebra structure on  $\mathbb{K}$   given in Lemma \ref{lem1}.  $\mathbb{K}$ is a weak Hopf algebra with the
		antipode given by
		$$
		S(1)=1,S(g)=(-1+\frac{1}{2b^{2}})\mu-i\nu,
		$$
		$$
		S(\mu)=2b^{2}g+2b^{2}\mu+2i b^{2}\nu,
		$$	
		$$
		S(\nu)=2ib^{2}g-(i-2ib^{2})\mu+ (1-2b^{2})\nu.
		$$	
\end{theorem}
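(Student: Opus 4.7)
The proof reduces to verifying the antipode axioms (\ref{eq4}), since Lemma \ref{lem1} establishes that $(\mathbb{K},\Delta,\varepsilon)$ is a coalgebra, Lemma \ref{lem2} gives the algebra-map property (\ref{eq1}) and the weak-unit property (\ref{eq3}), and Lemma \ref{lem3} gives the weak-counit property (\ref{eq2}). The associativity of the hybrid-number product follows directly from the multiplication table given in Section~2. Consequently, once the three antipode identities of (\ref{eq4}) are checked for each basis element, the theorem follows by linearity.

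My first step is to compute $\varepsilon_t$ and $\varepsilon_s$ on the basis $\{1,g,\mu,\nu\}$. From the five-term expression for $\Delta(1)$ in Lemma \ref{lem1}, one reads off
$$\varepsilon_t(h)=\tfrac{1}{2}\varepsilon(h)\,1-\tfrac{1}{2}\varepsilon(\mu h)\,\mu-\tfrac{i}{2}\varepsilon(\nu h)\,\mu-\tfrac{i}{2}\varepsilon(\mu h)\,\nu+\tfrac{1}{2}\varepsilon(\nu h)\,\nu,$$
and an analogous formula for $\varepsilon_s(h)=\varepsilon(h1_{(2)})1_{(1)}$. Evaluating the products $\mu h$, $h\mu$, $\nu h$, $h\nu$ for $h\in\{1,g,\mu,\nu\}$ via the multiplication table and applying $\varepsilon$ from Lemma \ref{lem1} gives closed-form expressions for $\varepsilon_t(x)$ and $\varepsilon_s(x)$ at each basis vector.

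Next, for each basis element $x$ I would compute $x_{(1)}S(x_{(2)})$ and $S(x_{(1)})x_{(2)}$ by substituting $\Delta(x)$ from Lemma \ref{lem1} and the formulas for $S$ stated in the theorem, then reducing every resulting term using $\mu^{2}=0$, $\nu^{2}=1$, $g^{2}=-1$, and $g\nu=-\nu g=\mu+g$. Comparing with the tables for $\varepsilon_t$ and $\varepsilon_s$ from the previous step verifies the first two identities of (\ref{eq4}). For the twisted identity $S(x_{(1)})x_{(2)}S(x_{(3)})=S(x)$, I would expand $(\Delta\otimes\id)\Delta(x)$ using coassociativity (which has already been checked in Lemma \ref{lem1}), apply $S$ to the first and third tensor slots, and collapse the resulting triple product via the multiplication table, comparing with the explicit $S(x)$ stated in the theorem.

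The main obstacle is the bulk of the bookkeeping, especially for $x=g$: its coproduct already has thirteen summands, its triple coproduct yields a much larger sum, and each product of three basis elements must be reduced using the non-commutative relations. The parameter $b$ enters the denominators of $S(g)$ and the weights of $\Delta(g)$, and the verification hinges on the clean cancellation of $b$ between these two sources; tracking these cancellations is the most error-prone step. Once the three identities of (\ref{eq4}) are verified on each of $1,g,\mu,\nu$, linearity of $\Delta$, $\varepsilon$, $S$ together with bilinearity of multiplication upgrade them to all of $\mathbb{K}$, completing the proof.
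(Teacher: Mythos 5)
Your proposal is correct and follows essentially the same route as the paper: invoke Lemmas \ref{lem1}--\ref{lem3} for the weak bialgebra axioms, then verify the antipode conditions (\ref{eq4}) on the basis $\{1,g,\mu,\nu\}$ by direct computation with the multiplication table (comparing $x_{(1)}S(x_{(2)})$ and $S(x_{(1)})x_{(2)}$ with $\varepsilon_t(x)$ and $\varepsilon_s(x)$), and extend by linearity. The only difference is one of completeness: the paper carries out only the sample case $x=\nu$ for the first two identities and asserts the rest "can be checked similarly," whereas you explicitly plan to check all basis elements and the third identity $S(x_{(1)})x_{(2)}S(x_{(3)})=S(x)$ as well.
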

\begin{proof}
	By Lemmas \ref{lem1},\ref{lem2} and \ref{lem3}, we know that  $\mathbb{K}$ is a bialgebra. Next, we shall check that $S$ satisfies the condition (\ref{eq4}). For example,
	\begin{eqnarray*}
		\nu_{(1)}S(\nu_{(2)})
		&=& -\frac{i}{2}\mu S(1)+\frac{1}{2}\nu S(1)-\frac{i}{2} S(\mu)+\frac{i}{2b}\mu S(\mu)+\frac{1}{2} S(\nu)\\
		&=& -\frac{i}{2}\mu +\frac{1}{2}\nu -\frac{i}{2} (2b^{2}g+2b^{2}\mu+2i b^{2}\nu)\\
		&&+\frac{i}{2b}\mu (2b^{2}g+2b^{2}\mu+2i b^{2}\nu)+\frac{1}{2} (2ib^{2}g-(i-2ib^{2})\mu+ (1-2b^{2})\nu)\\
		&=&(b-i)\mu+(1+ib)\nu+ib
			\end{eqnarray*}
	and
	\begin{eqnarray*}
	\varepsilon(1_{(1)}\nu)1_{(2)}
	&=&\frac{1}{2}\varepsilon(\nu)1-\frac{1}{2}\varepsilon(\mu\nu)\mu-\frac{i}{2}\varepsilon(\nu\nu)\mu
	-\frac{i}{2}\varepsilon(\mu\nu)\nu+\frac{1}{2}\varepsilon(\nu\nu)\nu\\
		&=&\frac{1}{2}\varepsilon(\nu)1-\frac{1}{2}\varepsilon(-\mu)\mu-\frac{i}{2}\varepsilon(1)\mu
	-\frac{i}{2}\varepsilon(-\mu)\nu+\frac{1}{2}\varepsilon(1)\nu\\
	&=&(b-i)\mu+(1+ib)\nu+ib,
	\end{eqnarray*}
so	this shows that
	$$
		\nu_{(1)}S(\nu_{(2)})=\varepsilon(1_{(1)}\nu)1_{(2)}.
	$$
Since 		\begin{eqnarray*}
		S(\nu_{(1)})\nu_{(2)}
		&=& -\frac{i}{2}S(\mu) +\frac{1}{2}S(\nu)-\frac{i}{2} \mu+\frac{i}{2b}S(\mu)\mu +\frac{1}{2} \nu\\
		&=& -\frac{i}{2}(2b^{2}g+2b^{2}\mu+2i b^{2}\nu) +\frac{1}{2}(2ib^{2}g-(i-2ib^{2})\mu+ (1-2b^{2})\nu)\\
		&&-\frac{i}{2} \mu+\frac{i}{2b}(2b^{2}g+2b^{2}\mu+2i b^{2}\nu)\mu +\frac{1}{2} \nu\\
		&=&bi+(1-bi)\nu-(b+i)\mu
	\end{eqnarray*}
and
	\begin{eqnarray*}
1_{(1)}\varepsilon(\nu 1_{(2)})	&=&\frac{1}{2}1	\varepsilon(\nu 1)-\frac{1}{2}\mu	\varepsilon(\nu \mu)-\frac{i}{2}\nu	\varepsilon(\nu \mu)
	-\frac{i}{2}\mu\varepsilon(\nu \nu)+\frac{1}{2}\varepsilon(\nu\nu)\nu\\
	&=&\frac{1}{2}1	\varepsilon(\nu 1)-\frac{1}{2}\mu	\varepsilon( \mu)-\frac{i}{2}\nu	\varepsilon(\mu)
	-\frac{i}{2}\mu\varepsilon(1)+\frac{1}{2}\varepsilon(1)\nu\\
	&=&bi+(1-bi)\nu-(b+i)\mu,
\end{eqnarray*}
it follows that
$$
S(\nu_{(1)})\nu_{(2)}=1_{(1)}\varepsilon(\nu 1_{(2)}).
$$
The others can be checked similarly.
\end{proof}

Except for the weak Hopf algebra structure on $\mathbb{K}$ given in Theorem \ref{thm1}, there exists another weak Hopf algebra structure on $\mathbb{K}$ given by
\begin{itemize}
	\item The coalgebra structure:
	$$
	\Delta(1)=\frac{1}{2} 1\otimes 1-\frac{1}{2} \mu\otimes \mu+\frac{i}{2} \nu\otimes \mu+\frac{i}{2} \mu\otimes \nu+\frac{1}{2} \nu\otimes \nu,
	$$
	\begin{eqnarray*}
		\Delta(g)&=&-\frac{1}{2} \mu\otimes 1+\frac{i}{2} \nu\otimes 1+b g\otimes g
		+b \mu \otimes g - ib \nu\otimes g\\
		&&-\frac{1}{2} 1\otimes \mu+b g\otimes \mu
		+b \mu \otimes \mu - ib \nu\otimes \mu\\
		&&+\frac{i}{2} 1\otimes \nu-ib g\otimes \nu
		-ib \mu \otimes \nu -b \nu\otimes \nu
	\end{eqnarray*}
	
	$$
	\Delta(\mu)=\frac{1}{2b}\mu\otimes\mu,
	$$	
	$$
	\Delta(\nu)=\frac{i}{2}\mu \otimes 1+\frac{1}{2}\nu\otimes 1
	+\frac{i}{2}1 \otimes \mu-\frac{i}{2b}\mu\otimes \mu+\frac{1}{2}1\otimes \nu,
	$$	
	$$
	\varepsilon(1)=2, \varepsilon(g)=\frac{1}{b},\varepsilon(\mu)=2b, \varepsilon(\nu)=-2ib,
	$$	
\item The antipode:
$$
S(1)=1,S(g)=(-1+\frac{1}{2b^{2}})\mu+i\nu,
$$

$$
S(\mu)=2b^{2}g+2b^{2}\mu-2i b^{2}\nu,
$$	
$$
S(\nu)=-2ib^{2}g-(-i+2ib^{2})\mu+ (1-2b^{2})\nu.
$$			
\end{itemize}

Next, we shall discuss the target algebra $\mathbb{K}_{t}$. By a straightforward computation, we have
$$
\varepsilon_{t}(\nu)=(b-i)\mu+(1+ib)\nu+ib, \varepsilon_{t}(\mu)=b1-bi\mu+b\nu,
$$
$$
\varepsilon_{t}(g)=\frac{1}{2b}1+(\frac{i}{2b}-1)\mu-(i+\frac{1}{2b})\nu.
$$
Set $Q=\mathbb{C}\textlangle1, (\nu-i \mu)\textrangle$.  Since
$$
\varepsilon_{t}(g)=\frac{1}{2b}-(i+\frac{1}{2b})(\nu-i \mu), \varepsilon_{t}(\mu)=b1+b(\nu-i \mu),
$$
$$
\varepsilon_{t}(\nu)=bi 1+(1+bi)(\nu-i \mu),
$$
it follows that $\varepsilon_{t}(\mathbb{K})\subseteq Q$. Since
$$
01+0\varepsilon_{t}(g)-i\varepsilon_{t}(\mu)+\varepsilon_{t}(\nu)=\nu-i \mu,
$$
we have $Q\subseteq \varepsilon_{t}(\mathbb{K})$. Thus
$$\varepsilon_{t}(\mathbb{K})= \mathbb{C}\textlangle1, (\nu-i \mu)\textrangle.$$
Next, we discuss the source algebra $\varepsilon_{s}(\mathbb{K})$. First, we have
$$
\varepsilon_{s}(g)=\frac{1}{2b}-(1+\frac{i}{2b})\mu+(\frac{1}{2b}-i)\nu, \varepsilon_{s}(\mu)=b+ib\mu-b\nu,
$$
$$
\varepsilon_{s}(\nu)=bi-(b+i)\mu+(1-bi)\nu.
$$
 Since
$$
\varepsilon_{s}(g)=\frac{1}{2b}+(\frac{1}{2b}-i)(\nu-i \mu), \varepsilon_{s}(\mu)=b 1-b(\nu-i \mu),
$$
$$
\varepsilon_{s}(\nu)=bi 1+(1-bi)(\nu-i \mu),
$$
it follows that $\varepsilon_{s}(\mathbb{K})\subseteq Q$. Since
$$
01+0\varepsilon_{s}(g)-i\varepsilon_{s}(\mu)+\varepsilon_{s}(\nu)=\nu-i \mu,
$$
we have $Q\subseteq \varepsilon_{s}(\mathbb{K})$. Thus
$$\varepsilon_{s}(\mathbb{K})= \mathbb{C}\textlangle1, (\nu-i \mu)\textrangle.$$
Through the discussion as above, we know that  $\varepsilon_{s}(\mathbb{K})= \varepsilon_{t}(\mathbb{K})$.
\begin{proposition}
	The subalgebra $\mathbb{C}\textlangle1, (\nu-i \mu)\textrangle$ of $\mathbb{K}$  is  separable algebras with the separable idempotent given by
	\begin{eqnarray*}
	q&=&\frac{1}{2}1\otimes 1-\frac{1}{2}\mu\otimes\mu-\frac{i}{2}\nu\otimes\mu
-\frac{i}{2}\mu\otimes\nu+\frac{1}{2}\nu\otimes\nu.
	\end{eqnarray*}
\end{proposition}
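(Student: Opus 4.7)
The plan is to invoke Proposition~2.11 of \cite{Bohm}, cited in Section~2, which states that for any weak Hopf algebra $H$ the target subalgebra $H_{t}=\varepsilon_{t}(H)$ is separable with separable idempotent $q=S(1_{(1)})\otimes 1_{(2)}$. Since the computations immediately preceding the proposition have already identified $\varepsilon_{t}(\mathbb{K})$ with $\mathbb{C}\langle 1,\nu-i\mu\rangle$, the separable algebra structure is automatic; the only substantive task is to verify that $S(1_{(1)})\otimes 1_{(2)}$, expanded using the explicit formulas for $\Delta(1)$ in Lemma~\ref{lem1} and for $S$ in Theorem~\ref{thm1}, simplifies to the $q$ displayed in the statement.

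First I would write down the five-term expansion of $S(1_{(1)})\otimes 1_{(2)}$, substituting $S(1)=1$, $S(\mu)=2b^{2}g+2b^{2}\mu+2ib^{2}\nu$, and $S(\nu)=2ib^{2}g-(i-2ib^{2})\mu+(1-2b^{2})\nu$. Then, organising the resulting fifteen summands by the second tensor factor $\{1,\mu,\nu\}$, I would check in each case that the coefficients of $g$ cancel and that the $b$-dependent parts of the $\mu$- and $\nu$-coefficients collapse, leaving exactly the five terms claimed. The main obstacle is purely bookkeeping---there is no conceptual difficulty, only the need to track several complex coefficients involving $b$ and to confirm that every $g$-term vanishes.

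As an independent sanity check, and a route that bypasses the antipode altogether, one can rewrite the claimed $q$ as $q=\tfrac{1}{2}\,1\otimes 1+\tfrac{1}{2}\,\alpha\otimes\alpha$ with $\alpha=\nu-i\mu$. The multiplication table yields $\alpha^{2}=\nu^{2}-i\nu\mu-i\mu\nu-\mu^{2}=1-i\mu+i\mu-0=1$, so $Q=\mathbb{C}\langle 1,\alpha\rangle\cong \mathbb{C}[x]/(x^{2}-1)\cong \mathbb{C}\oplus\mathbb{C}$. In this description $e_{\pm}=\tfrac{1}{2}(1\pm\alpha)$ are orthogonal primitive idempotents, and the classical separable idempotent $e_{+}\otimes e_{+}+e_{-}\otimes e_{-}$ of $\mathbb{C}\oplus\mathbb{C}$ expands to $\tfrac{1}{2}\,1\otimes 1+\tfrac{1}{2}\,\alpha\otimes\alpha$. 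This simultaneously confirms that $q\in Q\otimes Q$, that $m(q)=\tfrac{1}{2}(1+\alpha^{2})=1$, and that the intertwining relation $(x\otimes 1)q=q(1\otimes x)$ holds on the generator $x=\alpha$ (and trivially on $1$), from which separability of $Q$ with this $q$ follows directly.
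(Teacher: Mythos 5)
Your main route is exactly the paper's proof: the paper also invokes Proposition~2.11 of \cite{Bohm} (recalled in its preliminaries) together with the identification $\varepsilon_{t}(\mathbb{K})=\mathbb{C}\langle 1,\nu-i\mu\rangle$, and then simply expands $q=S(1_{(1)})\otimes 1_{(2)}$ using $\Delta(1)$ and the antipode formulas, checking that all $g$-terms and all $b$-dependent coefficients cancel to leave the displayed five-term expression (which, as you note, coincides with $\Delta(1)$ written as $\tfrac12\,1\otimes 1+\tfrac12\,\alpha\otimes\alpha$ with $\alpha=\nu-i\mu$). Your second, antipode-free check --- $\alpha^{2}=1$, so $Q\cong\mathbb{C}\oplus\mathbb{C}$ with $q=e_{+}\otimes e_{+}+e_{-}\otimes e_{-}$, $m(q)=1$ and $(x\otimes 1)q=q(1\otimes x)$ verified on the generator $\alpha$ --- is correct and is a genuine addition: the paper never verifies the separability axioms for the stated $q$ directly, relying entirely on the cited general result, whereas your argument confirms them by hand and is independent of the weak Hopf structure.
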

\begin{proof}
	The desired $q$ is
	\begin{eqnarray*}
		q&=&S(1_{(1)})\otimes 1_{(2)}\\
		&=&\frac{1}{2}  1-\frac{1}{2} S(\mu) \mu-\frac{i}{2} S(\nu) \mu-\frac{i}{2} S(\mu) \nu+\frac{1}{2} S(\nu) \nu\\
		&=&\frac{1}{2}  1-\frac{1}{2} (2b^{2}g+2b^{2}\mu+2i b^{2}\nu) \mu\\
		&&-\frac{i}{2} (2ib^{2}g-(i-2ib^{2})\mu+ (1-2b^{2})\nu) \mu-\frac{i}{2} (2b^{2}g+2b^{2}\mu+2i b^{2}\nu) \nu\\
		&&+\frac{1}{2} (2ib^{2}g-(i-2ib^{2})\mu+ (1-2b^{2})\nu) \nu\\
		&=&\frac{1}{2}1\otimes 1-\frac{1}{2}\mu\otimes\mu-\frac{i}{2}\nu\otimes\mu
		-\frac{i}{2}\mu\otimes\nu+\frac{1}{2}\nu\otimes\nu.
	\end{eqnarray*}
\end{proof}

\section{Integrals on $\mathbb{K}$ }
\label{xxsec0}
In this section, we shall discuss the left and right integrals in $\mathbb{K}$.

\subsection{ Left integrals in $\mathbb{K}$}
Set
$$
l=k_{1}1+k_{2}g+k_{3}\mu+k_{4}\nu.
$$
If $l$ is a left integral, then we have
\begin{equation}\label{ceq1}\tag{4.1}
xl=\varepsilon_{t}(x)l,\, x\in \{g,\mu,\nu\}.
\end{equation}
From (\ref{ceq1}), we have the following system of equations
\begin{equation}\label{ceq2}\tag{4.2}
2 b k_3+2 i b k_4-k_1-i k_2+k_4=0,
\end{equation}
\begin{equation}\label{ceq3}\tag{4.3}
k_1+k_4=\left(\frac{1}{b}+i\right) k_2,
\end{equation}
\begin{equation}\label{ceq4}\tag{4.4}
(2 b-i) k_1+(-1-2 i b) k_2+i \left(2 b k_3+k_4\right)=0,
\end{equation}
\begin{equation}\label{ceq5}\tag{4.5}
(1+2 i b) k_1+(2 b-i) k_2-2 b k_3-k_4=0,
\end{equation}
\begin{equation}\label{ceq7}\tag{4.6}
k_2=b \left(k_1-i k_2+k_4\right),
\end{equation}
\begin{equation}\label{ceq8}\tag{4.7}
(1+i b) k_1+b k_2+(-1-i b) k_4=2 b k_3,
\end{equation}
\begin{equation}\label{ceq9}\tag{4.8}
i b \left(k_2-2 k_3\right)+(b-i) k_4=(b-i) k_1.
\end{equation}

By solving the system of the equations (\ref{ceq2})-(\ref{ceq9}),  we have the following result.

\begin{theorem}
The left integral on $\mathbb{K}$ is 	$$
I^{L}(\mathbb{K})=\mathbb{C}<\left(\frac{1}{b}+i\right) \mu -\nu +1, -\frac{(1-2 b (b-i)) \mu }{2 b^2}+\frac{(1+i b) \nu }{b}+g>.
$$
\end{theorem}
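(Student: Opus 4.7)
The strategy is to derive the full linear system (4.2)--(4.8) explicitly from the defining condition and then read off a two-parameter family of solutions. Since the condition $xl = \varepsilon_t(x)l$ is linear in $x$, it suffices to impose it on the basis $\{1, g, \mu, \nu\}$; the case $x=1$ is automatic because $\varepsilon_t(1) = 1$. For each of $x = g, \mu, \nu$ I would expand both sides explicitly: on the left using the multiplication table of Section 2 to compute $xl$, and on the right plugging in the values $\varepsilon_t(g), \varepsilon_t(\mu), \varepsilon_t(\nu)$ already computed just before this section, then multiplying them out against $l$. Equating coefficients of $1, g, \mu, \nu$ in each of the three resulting vector equations produces twelve scalar relations, which after discarding duplicates become exactly the seven equations (4.2)--(4.8).

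The next step is to solve this $7 \times 4$ homogeneous system. I would use $(k_1, k_2)$ as free parameters: equation (4.3) immediately yields $k_4 = (\tfrac{1}{b} + i) k_2 - k_1$, and equation (4.8), after substituting this value of $k_4$, gives
\[
k_3 = \tfrac{1}{2b}\bigl[(1+2ib)k_1 + (2b - i)k_2 - k_4\bigr].
\]
The remaining equations (4.2), (4.4), (4.5), (4.6), and (4.7) must then be shown to follow automatically from these two, i.e.\ the coefficient matrix has rank exactly $2$. This redundancy verification is the main (though still routine) computational obstacle; each check reduces to elementary algebra in the single parameter $b$. For instance, (4.6) becomes $k_2 = b(k_1 - ik_2 + (\tfrac{1}{b}+i)k_2 - k_1) = k_2$, which is an identity, and the other four collapse similarly.

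Finally, reading off the basis is immediate: $(k_1, k_2) = (1, 0)$ gives $k_4 = -1$ and $k_3 = \tfrac{1}{b} + i$, producing $l_1 = 1 + (\tfrac{1}{b} + i)\mu - \nu$; while $(k_1, k_2) = (0, 1)$ gives $k_4 = \tfrac{1 + ib}{b}$ and $k_3 = -\tfrac{1 - 2b(b-i)}{2b^2}$, producing $l_2 = g - \tfrac{1 - 2b(b-i)}{2b^2}\mu + \tfrac{1+ib}{b}\nu$. These two vectors are linearly independent, as their projections onto the $1$- and $g$-coordinates yield the standard basis of $\mathbb{C}^2$, and by construction they span the two-dimensional solution space. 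Hence $\{l_1, l_2\}$ is the claimed basis of $I^L(\mathbb{K})$.
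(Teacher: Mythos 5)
Your proposal is correct and follows essentially the same route as the paper: impose $xl=\varepsilon_{t}(x)l$ on the basis elements $x\in\{g,\mu,\nu\}$ to obtain the system (4.2)--(4.8), solve for $k_{3},k_{4}$ in terms of the free parameters $k_{1},k_{2}$, and read off the two spanning integrals, which match the paper's. One minor slip: the formula you quote for $k_{3}$ actually comes from equation (4.5), not (4.8), but it is the correct expression and yields the same solution as the paper's $k_{3}=-\frac{k_{2}-2b(b-i)(k_{2}+ik_{1})}{2b^{2}}$.
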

\begin{proof}
	The solutions for the system of the equations (\ref{cqg1})-(\ref{cqg2}) are
	$$
	k_3=-\frac{k_2-2 b (b-i) \left(k_2+i k_1\right)}{2 b^2}, k_4=-k_1+\frac{(1+i b) k_2}{b}.
	$$
	Thus all the left  integrals are
	\begin{eqnarray*}
		&&k_{1}1+k_{2}g+(-\frac{k_2-2 b (b-i) \left(k_2+i k_1\right)}{2 b^2})\mu+(-k_1+\frac{(1+i b) k_2}{b})\nu\\
		&=&k_{1}(\left(\frac{1}{b}+i\right) \mu -\nu +1)+k_{2}(-\frac{(1-2 b (b-i)) \mu }{2 b^2}+\frac{(1+i b) \nu }{b}+g),
	\end{eqnarray*}
	so we get the desired result.
\end{proof}
\subsection{Right integrals in $\mathbb{K}$}
Set
$$
r=k_{1}1+k_{2}g+k_{3}\mu+k_{4}\nu.
$$
If $r$ is a right integral, then we have
\begin{equation}\label{ceq100}\tag{4.9}
	rx=r\varepsilon_{s}(x),\, x\in \{g,\mu,\nu\}.
\end{equation}
From (\ref{ceq100}), we have the following system of equations
\begin{equation}\label{cqg1}\tag{4.10}
-2 b \left(k_3+i k_4\right)+k_1-i k_2+k_4=0,
\end{equation}
\begin{equation}\tag{4.11}
	k_1+\left(-\frac{1}{b}+i\right) k_2=k_4,
\end{equation}
\begin{equation}\tag{4.12}
(2 b+i) k_1+i \left((2 b+i) k_2-2 b k_3+k_4\right)=0,
\end{equation}
\begin{equation}\tag{4.13}
	(1-2 i b) k_1+(2 b+i) k_2-2 b k_3+k_4=0,
\end{equation}
\begin{equation}\tag{4.14}
-i b k_1+b k_2-i b k_4+k_1+k_4=2 b k_3,
\end{equation}
\begin{equation}\tag{4.15}
b k_1+i (b+i) k_2=b k_4,
\end{equation}
\begin{equation}\label{cqg2}\tag{4.16}
(b+i) k_1+i b \left(k_2-2 k_3\right)+(b+i) k_4=0.
\end{equation}

By solving the system of the equations (\ref{cqg1})-(\ref{cqg2}), we have the following result.

\begin{theorem}
	The right integral space of  $\mathbb{K}$ is
	$$
I^{R}(\mathbb{K})=\mathbb{C}<\left(\frac{1}{b}-i\right) \mu +\nu +1, \frac{(-1+2 b (b+i))  }{2 b^2}\mu+\frac{  (\text{ib}-1)}{b}\nu+g>.
	$$
\begin{proof}
The solutions for the system of the equations (\ref{cqg1})-(\ref{cqg2}) are
$$
k_{3}=-\frac{k_2+2 i b (b+i) \left(k_1+i k_2\right)}{2 b^2}, k_{4}=k_1+\frac{i (b+i) k_2}{b}.
$$
Thus all the right integrals are
\begin{eqnarray*}
&&k_{1}1+k_{2}g+(-\frac{k_2+2 i b (b+i) \left(k_1+i k_2\right)}{2 b^2})\mu+(k_1+\frac{i (b+i) k_2}{b})\nu\\
&=&k_{1}(\left(\frac{1}{b}-i\right) \mu +\nu +1)+k_{2}( \frac{(-1+2 b (b+i))  }{2 b^2}\mu+\frac{  (i b-1)}{b}\nu+g),
\end{eqnarray*}
so we get the desired result.
\end{proof}
\end{theorem}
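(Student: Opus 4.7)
The plan is to mirror the left-integral argument: parametrize a general element $r = k_1 \cdot 1 + k_2 g + k_3 \mu + k_4 \nu \in \mathbb{K}$ and impose the defining condition $rx = r\varepsilon_s(x)$ for all $x \in \mathbb{K}$. Since this condition is $\mathbb{C}$-linear in $x$ and automatic for $x = 1$, it suffices to verify it for $x \in \{g, \mu, \nu\}$. This reduces the problem to a finite linear-algebra question: extracting the coefficients of $1, g, \mu, \nu$ in each of $rg - r\varepsilon_s(g)$, $r\mu - r\varepsilon_s(\mu)$, $r\nu - r\varepsilon_s(\nu)$ and setting them equal to zero.

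Concretely, I would first compute $rg$, $r\mu$, $r\nu$ from the hybrid-number multiplication table recorded in Section~2, and then compute $r\varepsilon_s(g)$, $r\varepsilon_s(\mu)$, $r\varepsilon_s(\nu)$ using the explicit formulas for $\varepsilon_s(g)$, $\varepsilon_s(\mu)$, $\varepsilon_s(\nu)$ that have already been worked out in Section~3 (immediately before the proposition on separability). Equating the two sides basis-coefficient by basis-coefficient yields exactly the homogeneous system (\ref{cqg1})--(\ref{cqg2}) that the theorem cites.

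Next I would solve this linear system. A short Gauss-elimination argument shows that the rank of the system is two, so the solution space is two-dimensional. Two of the equations can be used to solve for $k_3$ and $k_4$ in terms of the free parameters $k_1$ and $k_2$, giving
\[
k_3 = -\frac{k_2 + 2 i b (b+i)(k_1 + i k_2)}{2 b^2}, \qquad k_4 = k_1 + \frac{i(b+i) k_2}{b},
\]
and the remaining equations are then verified to hold identically as consequences. Substituting back into $r$ and separating the coefficients of $k_1$ and $k_2$ expresses $r$ as a $\mathbb{C}$-linear combination of the two elements displayed in the theorem, which therefore span $I^R(\mathbb{K})$.

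The only real obstacle is the bookkeeping: the multiplication table is non-commutative, so $rg$, $r\mu$, $r\nu$ each involve several non-trivial products like $g\nu = \mu + g$, $\nu g = -\mu - g$ and $\mu \nu = -\mu$, and it is easy to misplace signs or factors of $i$. To keep things clean I would organize the computation by writing each of the six products $rx$ and $r\varepsilon_s(x)$ as a column vector in the basis $(1, g, \mu, \nu)$ and subtract; this makes the redundancy among the seven equations (\ref{cqg1})--(\ref{cqg2}) visible and minimizes the risk of arithmetic error. No genuinely new idea beyond the one used in the left-integral case is required.
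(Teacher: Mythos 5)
Your proposal is correct and follows essentially the same route as the paper: impose $rx = r\varepsilon_s(x)$ on the basis elements $g,\mu,\nu$ using the multiplication table and the explicit $\varepsilon_s$ values, obtain the linear system (4.10)--(4.16), solve for $k_3,k_4$ in terms of the free parameters $k_1,k_2$, and read off the two spanning integrals. The only difference is that you make explicit the rank/redundancy bookkeeping that the paper leaves implicit.
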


\noindent{\bf Data availability}  Data sharing not applicable to this article as no datasets were
generated or analysed during the current study.

\noindent{\bf Declarations}

\noindent {\bf Conflict of interest}  The authors have no relevant financial or non-financial
interests to disclose.

\end{document}